\documentclass[11pt]{article}

\usepackage[T1]{fontenc}
\usepackage[utf8]{inputenc}
\usepackage{lmodern}
\usepackage[margin=1in]{geometry}
\usepackage{amsmath,amssymb,amsthm,mathtools}
\usepackage{microtype}

\usepackage[hidelinks]{hyperref}

\newtheorem{theorem}{Theorem}
\newtheorem{lemma}[theorem]{Lemma}

\newcommand{\ex}{\operatorname{ex}}
\newcommand{\BC}{\mathrm{BC}}

\title{A note on the extremal number of Berge-\texorpdfstring{$C_4$}{C4}}
\author{
Nika Salia\thanks{
Theoretical Computer Science Department, Faculty of Mathematics and Computer Science, 
Jagiellonian University, Kraków, Poland. 
E-mail: \texttt{nika.salia@uj.edu.pl}.
}
\and
Casey Tompkins\thanks{
E-mail: \texttt{casey.tompkins@renyi.hu}}
}
\date{}

\begin{document}
\maketitle

\begin{abstract}
We improve the known upper bound for the extremal number of Berge-$C_4$-free $3$-uniform hypergraphs. More precisely, we prove that every $n$-vertex $3$-uniform hypergraph with no Berge cycle of length four has at most
\[
        \frac{n^{3/2}}{2+\sqrt2}+O(n)
\]
hyperedges. This improves the previous best-known leading constant $1/\sqrt{10}$ to $1/(2+\sqrt2)$.
\end{abstract}

\section{Introduction}

The extremal number of $C_4$ in graphs is a classical problem in graph theory.  The theorem of K\H{o}v\'ari, S\'os and Tur\'an~\cite{kHovari1954problem} 
and the finite-geometric constructions of Erd\H{o}s, R\'enyi and
S\'os~\cite{erdHos1966problem} yields the asymptotically sharp estimate
\[
        \ex(n,C_4)=\left(\frac12+o(1)\right)n^{3/2}.
\]
In this note, we investigate the corresponding problem for Berge $4$-cycles in $3$-uniform hypergraphs.

A Berge cycle of length $\ell$ consists of distinct vertices
$v_1,\ldots,v_\ell$ and $\ell$ distinct hyperedges $e_1,\ldots,e_\ell$ such that
\[
        \{v_i,v_{i+1}\}\subset e_i
\]
for every $i$, where indices are taken modulo $\ell$.  This notion originates in Berge~\cite{berge1985graphs}.  Let $\ex_r(n,\BC_\ell)$ denote the maximum number of edges in an $r$-uniform $n$-vertex hypergraph containing no Berge
cycle of length $\ell$. 

Gy\H{o}ri~\cite{gyHori2006triangle} proved that a Berge-$C_3$-free $3$-uniform hypergraph on
$n$ vertices has at most $n^2/8$ edges for all sufficiently large $n$, and this
bound is asymptotically sharp.  Write $\ex_r(n,\BC_\ell)$ for the maximum number of edges in an $n$-vertex, $r$-uniform hypergraph with no Berge cycle of length $\ell$. Gy\H{o}ri and Lemons proved more generally that,
\[
        \ex_r(n,\BC_\ell)=O_{r,\ell}\!\left(n^{1+1/\lfloor \ell/2\rfloor}\right)
\]
for every fixed $r\ge3$ and $\ell\ge3$~\cite{gyHori2012hypergraphs}.
We will focus on the case of $\ex_3(n,BC_4)$. F\"uredi and
\"Ozkahya obtained the bound
\[
        \ex_3(n,\BC_4)\le (1+o(1))\frac{2n^{3/2}}{3}
\]
as part of their work on Berge cycles of prescribed length~\cite{furedi20173}.  Ergemlidze, Gy\H{o}ri, Methuku, Salia and Tompkins~\cite{ergemlidze20203} later improved this to
\[
        \ex_3(n,\BC_4)\le (1+o(1))\frac{n^{3/2}}{\sqrt{10}}.
\]
Our main result is the following improvement of the leading coefficient.

\begin{theorem}\label{thm:main}
For every $n$,
\[
        \ex_3(n,\BC_4)\le \frac{n^{3/2}}{2+\sqrt2}+O(n).
\]
\end{theorem}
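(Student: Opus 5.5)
We follow the standard approach: pass from the hypergraph to a graph, then count. Let $H$ be a Berge-$C_4$-free $3$-uniform hypergraph on $n$ vertices with $m$ edges.

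\emph{Step 1 (structure).} Since $H$ is Berge-$C_4$-free, any two hyperedges share at most one vertex; otherwise $\{a,b\}\subset e\cap f$ forces Berge cycles once a little further structure is present. The small exceptional configurations can be removed at a cost of $O(n)$ edges. Let $G$ be the shadow graph, in which each hyperedge contributes a triangle. We will use that a $C_4$ in $G$ can only arise when its four edges do not come from four distinct hyperedges. Hence every $C_4$ of $G$ uses two edges of one triangle, and the $C_4$'s of $G$ are tightly controlled. Concretely, for any two vertices $u,v$, the common neighbourhood in $G$ is small except through the hyperedges containing $u$ or $v$.

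\emph{Step 2 (weighted cherry count).} Following the Kővári--Sós--Turán argument, we count pairs (vertex $x$, unordered pair $\{u,v\}\subset N_G(x)$) where $u$ and $v$ lie in different hyperedges through $x$. Since $d_G(x)=2d_H(x)$, vertex $x$ contributes $\binom{d_H(x)}{2}\cdot 4$ such cherries. Cherries lying inside a single hyperedge are discarded. We will show that each pair $\{u,v\}$ is the endpoint pair of at most a bounded number of these cross cherries. The main gain over the $1/\sqrt{10}$ bound comes from refining this bound: pairs $\{u,v\}$ that lie in a common hyperedge can have at most one such cherry, and pairs that do not can have at most two. This yields an inequality of the form
\[
4\sum_x\binom{d_H(x)}{2}\le \binom{n}{2}+ c\cdot(\text{number of pairs covered by }H)+O(n^{3/2}),
\]
with $3m$ covered pairs. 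We will optimize this by charging the covered pairs (three per edge) against the uncovered ones with a carefully chosen weight.

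\emph{Step 3 (convexity).} Apply Jensen with $\sum_x d_H(x)=3m$ to get $2\cdot 9m^2/n\lesssim n^2/2+\alpha m$. Solving the quadratic gives $m\le \frac{n^{3/2}}{2+\sqrt2}+O(n)$ once the weights are correct. Equivalently, we aim to prove that the number of pairs $\{u,v\}$ at "distance two", counted with a multiplicity that depends on whether they are covered, is at most $(1+o(1))\,n^2/2$ with the right mixing of the two types.

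The main obstacle is Step 2: determining exactly how many length-two Berge paths can join a pair $u,v$, by case analysis of configurations with one or two shared hyperedges, so that the constant comes out to exactly $2+\sqrt2$. If the naive count fails, the first thing to try is to assign each vertex pair a weight $w\in\{1,\sqrt2/2,\dots\}$. We would then apply Cauchy--Schwarz to $\sum_x d_H(x)^2$ separately over the two kinds of pairs, picking the weight that balances the two cases. The appearance of $\sqrt2$ suggests that such an optimized weighting is what the authors do.
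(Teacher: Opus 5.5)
\textbf{There is a fatal gap at Step 1, and everything downstream depends on it.}

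It is false that a Berge-$C_4$-free $3$-graph is linear after deleting $O(n)$ edges. Any number of triples $\{a,b,x_1\},\{a,b,x_2\},\dots$ sharing one pair form a Berge-$C_4$-free book. The Bollob\'as--Gy\H{o}ri construction, which is the best known lower bound, consists entirely of such books: one with about $\sqrt{n/3}$ pages at each of $n/3$ vertices. Making it linear keeps at most one page per book, so you must delete all but $O(n)$ of its $\Theta(n^{3/2})$ edges.

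In fact your own Step 3 inequality $18m^2/n\lesssim n^2/2+\alpha m$ gives $m\le n^{3/2}/6+O(n)$. Since $6>3\sqrt3$, this contradicts that construction, which has $(1+o(1))n^{3/2}/(3\sqrt3)$ edges. So the reduction cannot be repaired. The bound $1/6$ is the right order of answer for linear hypergraphs. The constant $1/(2+\sqrt2)$ cannot come out of a linear argument at all, and ``once the weights are correct'' is not a derivation, since your displayed inequality does not produce it. For the same reason, $d_G(x)=2d_H(x)$ fails once pairs are shared: in a book, each extra page through $a$ adds only one new shadow neighbour of $a$. Separately, the per-pair bounds in Step 2 are asserted without proof, and the $\binom n2$ on the right-hand side does not match your claim that uncovered pairs carry up to two cherries.

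\textbf{The missing idea is to exploit the block structure rather than discard it.}

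Partition $E(H)$ into blocks, the classes generated by sharing a pair. Berge-$C_4$-freeness forces each block to be $K_4^{(3)-}$ or a star around a central edge. This gives $|E(\partial B)|\ge 2|E(B)|$ and $|V(B)|\ge |E(B)|$.

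Track two parameters per vertex: the shadow degree $d_s(v)$ and the number $d_b(v)$ of blocks at $v$. They satisfy $\sum_v d_s(v)\ge 4m$ and $\sum_v d_b(v)\ge m$; this replaces your $\sum_v d_H(v)=3m$.

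Count all shadow cherries $P=\sum_v\binom{d_s(v)}2$ together with the cross-block cherries $Q$. Each vertex pair is the endpoint pair of at most two ``good'' paths, and of at most one if some cross-block path joins it. The non-good paths and the excess $Q-D$ are $O(m)$, by counting rare $4$-cycles. Together these give $P+Q\le n^2+O(m)$.

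The $\sqrt2$ comes from the per-vertex estimate
\[
\binom{d_s}{2}+2(d_b-1)(d_s-d_b)\ge\bigl(d_s/\sqrt2+(2-\sqrt2)d_b\bigr)^2-O(d_s),
\]
which holds because $d_s\ge 2d_b$. Cauchy--Schwarz then yields $(2+\sqrt2)^2m^2/n\le n^2+O(m)$.

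In the linear case ($d_s=2d_H$, $d_b=d_H$), this machinery collapses exactly to your bound $1/6$. That shows precisely where the non-linear blocks are needed.
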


The best-known lower bound has a leading constant $1/(3\sqrt3)$ and comes from
a construction of Bollob\'as and Gy\H{o}ri~\cite{bollobas2008pentagons}.
Start with an extremal $C_4$-free bipartite graph with color classes of size
$n/3$ and $(1+o(1))n^{3/2}/(3\sqrt3)$ edges.  Fix one color class, and for each vertex $v$ contained in that class, add a new vertex $v'$.  Then, replace each edge $vw$ incident with $v$ by the triple $\{v,v',w\}$.  The resulting $3$-uniform hypergraph has
$n$ vertices, has $(1+o(1))n^{3/2}/(3\sqrt3)$ edges, and contains no
Berge-$C_4$.

The proof of Theorem~\ref{thm:main} is a refinement the path-counting method of Ergemlidze, Gy\H{o}ri,
Methuku, Salia and Tompkins~\cite{ergemlidze20203}.

\section{Proof of Theorem~\ref{thm:main}}
For a hypergraph $H$, its $2$-shadow $\partial H$ is the graph on $V(H)$ in which $xy$ is an edge if some hyperedge of $H$ contains $\{x,y\}$.

We say that two hyperedges $e,f\in E(H)$ are equivalent if there is a sequence $e=e_1,e_2,\ldots,e_k=f$ of hyperedges of $H$ such that $|e_i\cap e_{i+1}|=2$ for every $i$. A \emph{block} of $H$ is the subhypergraph formed by one equivalence class of hyperedges, with vertex set equal to the union of those hyperedges. Thus, the blocks form a partition of $E(H)$. Every edge of $\partial H$ belongs to the $2$-shadow of a unique block.

The following block classification was used~\cite{ergemlidze20203}.
Let $H$ be a Berge-$C_4$-free $3$-uniform hypergraph on $n$ vertices, let $m=|E(H)|$, and put $G=\partial H$.  Every block $B$ of $H$ is either isomorphic to
$K_4^{(3)-}$, the $3$-uniform hypergraph on four vertices with three edges, or has the
following star-like form: there is an edge $e\in E(B)$ such that every other
edge of $B$ shares two vertices with $e$, and any two non-central edges
intersect only inside $e$.
Consequently,
\begin{equation}\label{Eq:block}
        |V(B)|\ge |E(B)|,
        \qquad
        |E(\partial B)|\ge 2|E(B)|
\end{equation}
for every block $B$.

For $v\in V(H)$, write $d_s(v)$ for the degree of $v$ in $G$, and let $d_b(v)$ be the number of blocks containing $v$ in $H$. By \eqref{Eq:block}, we have
\begin{equation}\label{eq:sums_low}       \sum_v d_s(v)=2|E(G)|=2\sum_B |E(\partial B)|
        \ge 2\sum_B 2|E(B)|=4m,   
         \qquad
        \sum_v d_b(v)=\sum_B |V(B)|\ge m.
\end{equation}
A hyperedge contributes at most three shadow edges
and is part of a unique block.  Hence
\[
    \sum_v d_s(v)\le 6m,
        \qquad
        \sum_v d_b(v)\le 3m.
\] 
For a block $B$ containing $v$, let $N_B(v)=\{x\in V(H): vx\in E(\partial B)\}$ and $a_B(v)=|N_B(v)|$. The sets $N_B(v)$ over all blocks containing $v$ partition $N_G(v)$, and $a_B(v)\ge2$ for every such $B$. For each vertex $v$, enumerate the blocks containing $v$ as $B_1,\ldots,B_{d_b(v)}$, and write $a_i(v)=a_{B_i}(v)$. Define
\[
        P=\sum_v\binom{d_s(v)}2,
        \qquad
        Q=\sum_v\sum_{1\le i<j\le d_b(v)}a_i(v)a_j(v).
\]
Thus, \(P\) is the total number of \(3\)-vertex paths in \(G\).
Among these paths, \(Q\) counts precisely those whose two edges lie in the shadows of distinct blocks. We call such paths \emph{cross-block paths}.

We say that a $4$-cycle $x_1x_2x_3x_4x_1$ in $G$ is \emph{rare} if the induced hypergraph $H[\{x_1,x_2,x_3,x_4\}]$ contains no two distinct hyperedges sharing diagonal pair $x_1x_3$ or $x_2x_4$. We say that a $3$-vertex path $xyz$ in $G$ is \emph{good} if $\{x,y,z\}\notin E(H)$ and there is no vertex $w$ such that $wxyzw$ is a rare $4$-cycle.

The next three lemmas are Claim~3, Claim~4, and the subsequent non-good path count in \cite{ergemlidze20203}. We include short proofs for completeness.

\begin{lemma}\label{lem:two-paths}
For any two distinct vertices $a,b\in V(H)$, the number of vertices
$v\in V(H)\setminus\{a,b\}$ for which $avb$ is a path in $G$ and
$\{a,b,v\}\notin E(H)$ is at most two.  Consequently, there are at most two
good paths with any prescribed pair of endpoints.
\end{lemma}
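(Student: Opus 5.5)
Suppose, for contradiction, that there are three distinct vertices $v_1,v_2,v_3\in V(H)\setminus\{a,b\}$ such that each $av_ib$ is a path in $G$ and $\{a,b,v_i\}\notin E(H)$. We will build a Berge-$C_4$ from two of these paths. For each $i$ fix hyperedges $e_i\ni a,v_i$ and $f_i\ni v_i,b$ of $H$. Since $\{a,b,v_i\}\notin E(H)$, we have $e_i\ne f_i$. Indeed, a hyperedge containing $a$, $v_i$ and $b$ would have to equal $\{a,v_i,b\}$. For the same reason, no $e_i$ or $f_i$ contains both $a$ and $b$ together with $v_i$.

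For a pair $i\neq j$, the sequence $a,v_i,b,v_j$ with hyperedges $e_i,f_i,f_j,e_j$ forms a Berge-$C_4$ whenever these four hyperedges are distinct. The vertices are distinct by assumption, and $e_i\ne f_i$, $e_j\ne f_j$ hold automatically. Each $e_i$ contains $a$ and $v_i$, so $e_i=e_j$ forces $e_i=\{a,v_i,v_j\}$. Similarly $f_i=f_j$ forces $f_i=\{b,v_i,v_j\}$. Next, $e_i=f_j$ would contain $a,v_i,v_j,b$, which is four vertices, so it is impossible. Likewise $e_j\ne f_i$. Hence the only possible coincidences are $e_i=e_j=\{a,v_i,v_j\}$ or $f_i=f_j=\{b,v_i,v_j\}$.

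The main step is to use the freedom in choosing the $e_i$ and $f_i$, together with the third vertex. Whenever we are forced to have $e_i=e_j$, the pair $av_i$ lies only in the hyperedge $\{a,v_i,v_j\}$ of $H$, apart from possible others. So fix the choices and look at the three pairs $\{1,2\},\{1,3\},\{2,3\}$. Each $e_i$ equals $\{a,v_i,v_j\}$ for at most one $j$, since it is a single triple containing $a$ and $v_i$. Thus the pairs $\{i,j\}$ with $e_i=e_j$ form a matching on $\{1,2,3\}$, so there is at most one such pair. The same holds for pairs with $f_i=f_j$. With three pairs available and at most two of them bad, some pair $\{i,j\}$ has $e_i\ne e_j$ and $f_i\ne f_j$. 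That pair gives a Berge-$C_4$, which is a contradiction. I expect this counting step to be the only delicate point, and it goes through since each $e_i$ is one fixed triple.

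For the consequence, take a good path $xyz$ with endpoints $x,z$. By definition it satisfies $\{x,y,z\}\notin E(H)$, so its middle vertex $y$ is one of the at most two admissible vertices for the pair $(a,b)=(x,z)$. A path is determined by its endpoints and its middle vertex. Therefore at most two good paths have the prescribed endpoints.
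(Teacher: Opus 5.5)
Your proof is correct and follows the paper's argument: you fix $e_i\supset\{a,v_i\}$ and $f_i\supset\{v_i,b\}$, rule out $e_i=f_i$ and $e_i=f_j$, and use the fact that no triple contains $a$ (or $b$) together with three of the $v$'s to find $i\ne j$ with $e_i\ne e_j$ and $f_i\ne f_j$, which gives the Berge-$C_4$ $a\,e_i\,v_i\,f_i\,b\,f_j\,v_j\,e_j\,a$. Your matching count over the three pairs is only a different bookkeeping of the paper's relabeling step (take $e_1$ distinct from $e_2,e_3$, then choose the index $2$ so that $f_2\ne f_1$), and the stray sentence about $av_i$ lying ``only'' in $\{a,v_i,v_j\}$ is unused and can be deleted.
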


\begin{proof}
Suppose $av_i b$ for $i=1,2,3$ are three such paths. Choose hyperedges $e_i\supset\{a,v_i\}$ and $f_i\supset\{b,v_i\}$. Since $\{a,v_i,b\}\notin E(H)$, we have $e_i\ne f_i$. Also $e_i\ne f_j$ for $i\ne j$, since one triple cannot contain $a,v_i,b,v_j$. Relabel so that $e_1\ne e_2,e_3$ and $f_1\ne f_2$. Then $e_1,f_1,f_2,e_2$ are distinct and form the Berge cycle $ae_1v_1f_1bf_2v_2e_2a$, a contradiction.
\end{proof}

\begin{lemma}\label{lem:rare}
There are at most $6m$ rare $4$-cycles in $G$.
\end{lemma}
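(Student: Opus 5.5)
Given a rare $4$-cycle $C=x_1x_2x_3x_4x_1$ in $G$, I would assign it to a hyperedge of $H$ and then show that each hyperedge receives at most six rare $4$-cycles.

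\textbf{Step 1: a Berge cycle is forced.} For each of the four edges $x_ix_{i+1}$ of $C$, pick a hyperedge $h_i\supseteq\{x_i,x_{i+1}\}$. If $h_1,\dots,h_4$ could be chosen pairwise distinct, they would form a Berge-$C_4$. So for every choice some two coincide. Two hyperedges for consecutive edges of $C$ can coincide only in a triple $\{x_{i-1},x_i,x_{i+1}\}$. Two hyperedges for opposite edges would need to contain all four vertices, which is impossible. Hence some triple of three consecutive vertices of $C$ is a hyperedge of $H$.

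\textbf{Step 2: the structure of a rare cycle.} Suppose, say, $\{x_1,x_2,x_3\}\in E(H)$. Rareness says that no other hyperedge inside $\{x_1,\dots,x_4\}$ contains the diagonal $x_1x_3$. I would now run the following case analysis. Fix $T=\{x_1,x_2,x_3\}$ and choose hyperedges $h_3\supseteq\{x_3,x_4\}$ and $h_4\supseteq\{x_4,x_1\}$. If $h_3\neq h_4$, and these can be chosen different from $T$, then the only way to avoid a Berge-$C_4$ is that the edges $x_1x_2$ and $x_2x_3$ are covered only by $T$. Other choices would again produce distinct hyperedges along the cycle. If instead $h_3=h_4$ is forced, then $\{x_3,x_4,x_1\}\in E(H)$, which contradicts rareness, since together with $T$ it gives two hyperedges of $H[\{x_1,\dots,x_4\}]$ containing $x_1x_3$. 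The conclusion should be: every rare $4$-cycle $C$ contains a hyperedge $T$ with the following property. $T$ consists of three consecutive vertices of $C$, and the two edges of $C$ inside $T$ lie in no other hyperedge.

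\textbf{Step 3: counting per hyperedge.} Charge $C$ to $T$. Given $T$, choose which vertex of $T$ is the middle vertex $x_2$; there are $3$ ways. Then choose an orientation for labeling $x_1,x_3$; this gives at most $2$ ways. I then need to show that the fourth vertex $x_4$ is determined. For this I would argue that two different candidates $x_4,x_4'$ adjacent in $G$ to both $x_1$ and $x_3$ produce a Berge-$C_4$ on $x_1x_4x_3x_4'$, or on a cycle through $x_2$, using the hyperedges covering $x_1x_4,\,x_4x_3,\,x_3x_4',\,x_4'x_1$. The only escape is when these hyperedges coincide, which requires triples such as $\{x_1,x_4,x_3\}$, and those are excluded by rareness. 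If uniqueness fails, I would fall back on Lemma~\ref{lem:two-paths}. Since $\{x_1,x_3,x_4\}\notin E(H)$, there are at most two such $x_4$, and the unordered choice of the middle vertex then gives $3\cdot 2=6$ cycles per hyperedge. This yields the bound of $6m$.

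\textbf{Main obstacle.} The delicate point is the bookkeeping in Steps 2 and 3: the case analysis must confirm that some triple always exists, and the per-hyperedge multiplicity must come out to exactly $6$ rather than $12$. I would resolve this by counting unordered cycles. A cycle is determined by $T$, its middle vertex, and $x_4$, with the orientation absorbed, and Lemma~\ref{lem:two-paths} provides the factor $2$.
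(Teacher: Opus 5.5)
Your Step~1 and the count you finally settle on in Step~3 are exactly the paper's proof. Every $4$-cycle spans a hyperedge $T$ on three consecutive vertices. For each of the $3$ choices of middle vertex of $T$, rareness gives $\{x_1,x_3,x_4\}\notin E(H)$, so Lemma~\ref{lem:two-paths} allows at most $2$ choices of $x_4$.

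The problem is Step~2, which is false, and as written your Step~3 charges each cycle to the $T$ produced there. Take $H$ with the three hyperedges $\{x_1,x_2,x_3\}$, $\{x_1,x_2,x_4\}$, $\{x_3,x_4,y\}$; it is Berge-$C_4$-free since it has only three edges. The cycle $x_1x_2x_3x_4x_1$ is rare, because each diagonal lies in at most one hyperedge inside $\{x_1,\dots,x_4\}$. Yet both hyperedges inside its vertex set contain the cycle edge $x_1x_2$, so no $T$ with your property exists. The faulty inference is ``the only way to avoid a Berge-$C_4$ is that $x_1x_2$ and $x_2x_3$ are covered only by $T$''. A second hyperedge through $x_1x_2$ may be $\{x_4,x_1,x_2\}$, which coincides with the only available $h_4$ and so blocks a choice of four distinct hyperedges. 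The repair is to delete Step~2 and charge $C$ to any hyperedge supplied by Step~1; your count never uses the extra property.

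Likewise, the uniqueness of $x_4$ that you first try to prove in Step~3 is false. Take the hyperedges $\{a,b,c\}$, $\{a,z,z'\}$, $\{b,z,p\}$, $\{b,z',q\}$. This is Berge-$C_4$-free, since every cyclic arrangement of the four hyperedges needs the vertex $b$ twice. Both $acbza$ and $acbz'a$ are rare, with the same $T$ and the same middle vertex $c$. So the fallback to Lemma~\ref{lem:two-paths} is not optional: it is where the factor $2$ comes from, and orientation contributes nothing for unordered cycles. With that fallback, the bound of $6$ per hyperedge, hence $6m$, is correct.
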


\begin{proof}
Every $4$-cycle in $G$ can be represented by some hyperedge contained in its four vertices.
Otherwise, its four shadow edges would be supported by four distinct
hyperedges, giving a Berge-$C_4$.  Fix such a representative
$e=\{a,b,c\}$.  If a rare cycle represented by $e$ has diagonal $ab$ and fourth
vertex $z$, then $azb$ is a path in $G$ not contained in a hyperedge.  By
Lemma~\ref{lem:two-paths}, there are at most two choices for $z$.  Applying the
same argument to the three pairs in $e$, the edge $e$ represents at most six
rare cycles. Thus, there are at most $6m$ rare $4$-cycles in $G$.
\end{proof}

\begin{lemma}\label{lem:nongood}
The number of non-good $3$-vertex paths in $G$ is at most $21m$.
\end{lemma}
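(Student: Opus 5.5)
A $3$-vertex path $xyz$ in $G$ fails to be good for one of two reasons: either $\{x,y,z\}\in E(H)$, or there is a vertex $w$ with $wxyzw$ a rare $4$-cycle. We bound the two kinds separately and add.

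\emph{Paths spanning a hyperedge.} A hyperedge $\{a,b,c\}$ contains exactly three $3$-vertex paths of $G$, namely $bac$, $abc$ and $acb$, indexed by the middle vertex. A path $xyz$ with $\{x,y,z\}\in E(H)$ determines that hyperedge uniquely. Hence there are at most $3m$ paths of this kind.

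\emph{Paths on a rare $4$-cycle.} If $wxyzw$ is a rare $4$-cycle, then $xyz$ is one of the four $3$-vertex subpaths of this cycle, obtained by deleting one vertex. Conversely, each $4$-cycle contains exactly four $3$-vertex paths, one for each choice of the omitted vertex. Combining this with Lemma~\ref{lem:rare}, the number of paths of this kind is at most $4\cdot 6m=24m$.

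This crude count gives $27m$, not the claimed $21m$. We must therefore either sharpen Lemma~\ref{lem:rare}, or avoid charging some paths twice, or show that some paths counted above are never generated.

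\emph{Sharpening the count.} The refinement uses the representative hyperedge from the proof of Lemma~\ref{lem:rare}. Let $e=\{a,b,c\}$ represent a rare cycle with diagonal $ab$ and fourth vertex $z$, so the cycle is $a\,c\,b\,z\,a$. Its four subpaths are $acb$, $cbz$, $bza$ and $zac$. The path $acb$ lies inside the hyperedge $e$, so it is already counted among the $3m$ paths of the first kind. Each rare cycle therefore contributes at most three new non-good paths. The total then becomes $3m+3\cdot 6m=21m$, exactly the claimed bound.

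The main obstacle is justifying that every rare cycle really has a representative hyperedge contained in its vertex set with two of its vertices as a diagonal pair, as in the above configuration. In the proof of Lemma~\ref{lem:rare}, the representative $e\subseteq\{x_1,x_2,x_3,x_4\}$ is a triple of cycle vertices, and any three vertices of a $4$-cycle contain exactly one diagonal pair together with the vertex adjacent to both ends of it. Consequently, the path formed by those three vertices, with middle vertex off the diagonal, lies in $e$. The bookkeeping therefore follows the injection in Lemma~\ref{lem:rare}: each rare cycle is charged to a pair (representative hyperedge, diagonal pair in it, choice of $z$), giving at most $6m$ cycles, and each cycle contributes its three paths not contained in the representative.
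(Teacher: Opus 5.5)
Your proof is correct and follows the paper's argument. There are at most $3m$ paths lying inside hyperedges, and each of the at most $6m$ rare $4$-cycles contributes at most three further non-good paths, because its subpath on the three vertices of a representative hyperedge is already among those $3m$; the preliminary crude $27m$ count is unnecessary but harmless.
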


\begin{proof}
A non-good path is either contained in a hyperedge of $H$, which are at most $3m$, or lies in a rare $4$-cycle. By Lemma \ref{lem:rare}, there are at most $6m$ rare $4$-cycles. Each has a representative hyperedge on three of its vertices, so each contributes at most three further paths. The total is at most $3m+3\cdot6m=21m$.
\end{proof}

The following lemma is the part of Claim~5 in \cite{ergemlidze20203} that we need, stated in terms of cross-block paths.

\begin{lemma}\label{lem:cross-one}
If $\{x,y\}$ is the endpoint pair of a cross-block path $xvy$, then there is at most one good path in $G$ with endpoints $x,y$.
\end{lemma}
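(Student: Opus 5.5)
The plan is to argue by contradiction. Suppose $xvy$ is a cross-block path and $xw_1y$, $xw_2y$ are two distinct good paths, so $w_1\ne w_2$. I will first use goodness to pin down which hyperedges lie on $\{x,y,w_1,w_2\}$. Then I will use the cross-block path through $v$ to close a Berge-$C_4$.

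\emph{Step 1.} Consider the $4$-cycle $xw_1yw_2x$ in $G$. If it were rare, then $w_2$ would be a vertex $w$ for which $wxw_1yw$ is a rare $4$-cycle. That contradicts goodness of $xw_1y$. So this cycle is not rare: $H[\{x,y,w_1,w_2\}]$ contains two distinct hyperedges sharing the diagonal pair $xy$ or the pair $w_1w_2$. Inside these four vertices, the only hyperedges containing $\{x,y\}$ are $\{x,y,w_1\}$ and $\{x,y,w_2\}$. Both are excluded by the definition of a good path. Hence $e_x=\{x,w_1,w_2\}$ and $e_y=\{y,w_1,w_2\}$ are both hyperedges of $H$.

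\emph{Step 2.} Show that $v\notin\{w_1,w_2\}$. The hyperedges $e_x$ and $e_y$ share two vertices, so they lie in the same block $B$. Thus $xw_i$ and $yw_i$ belong to $\partial B$ for $i=1,2$. Since every shadow edge lies in the shadow of a unique block, $v=w_i$ would force both edges of the path $xvy$ into $\partial B$. This contradicts the assumption that $xvy$ is cross-block. Hence $x,v,y,w_1$ are four distinct vertices.

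\emph{Step 3.} Build the Berge cycle. Pick hyperedges $f_1\supset\{x,v\}$ and $f_2\supset\{v,y\}$ whose shadows contain these edges. They lie in distinct blocks by the cross-block assumption, so $f_1\ne f_2$. Neither contains $v$'s absence: both contain $v$, while $v\notin e_x\cup e_y$. Therefore $f_1,f_2\notin\{e_x,e_y\}$, and $e_x\ne e_y$. So
\[
x\,f_1\,v\,f_2\,y\,e_y\,w_1\,e_x\,x
\]
is a Berge-$C_4$, which is a contradiction.

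I expect the main subtlety to be Step 1: correctly translating ``not rare'' into the existence of the two triples $\{x,w_1,w_2\}$ and $\{y,w_1,w_2\}$. The key point is that the triples containing $xy$ are ruled out precisely by the clause $\{x,w_i,y\}\notin E(H)$ in the definition of good. After that, the distinctness checks in Steps 2 and 3 are routine, and the cross-block hypothesis is used only to guarantee $f_1\ne f_2$ and $v\ne w_i$.
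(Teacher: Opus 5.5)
Your proof is correct, but it is organized differently from the paper's. The paper never considers two good paths at once. It pairs the cross-block path $xvy$ with a single hypothetical good path $xuy$, $u\ne v$. The supporting hyperedges would form a Berge-$C_4$ on $x,v,y,u$ unless $\{x,u,v\}$ or $\{y,u,v\}$ is a hyperedge. In that case the paper uses the cross-block hypothesis to rule out $\{x,v,y\}$ and the second triple on $uv$, so $vxuyv$ is rare, contradicting goodness of $xuy$. This gives a sharper conclusion: $xvy$ itself is the only path with endpoints $x,y$ that can be good, so a pair with two cross-block middle vertices has no good path at all. Only ``at most one'' is needed for the bound $2\binom n2-D$, however.

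You instead apply goodness to the $4$-cycle formed by the two good paths. This isolates a structural fact that does not involve $v$: two good paths $xw_1y$, $xw_2y$ force $\{x,w_1,w_2\},\{y,w_1,w_2\}\in E(H)$, so all four edges $xw_i,w_iy$ lie in one block. The cross-block hypothesis then enters only to give $v\notin\{w_1,w_2\}$ and $\{x,v,y\}\notin E(H)$. From there, your Step~3 is a special case of the argument of Lemma~\ref{lem:two-paths}: $v,w_1,w_2$ are three distinct middle vertices of $x$--$y$ paths in $G$, none forming a hyperedge with $x,y$. You could therefore simply cite that lemma. The phrase ``Neither contains $v$'s absence'' in Step~3 is garbled; what you mean is that $f_1,f_2$ contain $v$ while $e_x\cup e_y=\{x,y,w_1,w_2\}$ does not.
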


\begin{proof}
Assume that, besides the cross-block path $xvy$, there is a good path $xuy$,
where $u\ne v$.  Choose supporting hyperedges
\[
e_x\supset\{x,v\},\quad e_y\supset\{v,y\},\quad
h_x\supset\{x,u\},\quad h_y\supset\{u,y\},
\]
with $e_x$ and $e_y$ in distinct blocks.  Since $xuy$ is good, $h_x\ne h_y$.
If $e_x,e_y,h_y,h_x$ were distinct, they would form a Berge-$C_4$  $xvyu$.  Hence, some coincidence must occur. As the hypergraph is $3$-uniform, we have either $h_x=e_x$ or $h_y=e_y$.


Assume \(h_x=e_x\), so \(e_x=\{x,u,v\}\). The case \(h_y=e_y\) is symmetric.
We show the cycle \(vxuyv\) is rare. 
The diagonals of \(vxuyv\) are \(vu\) and \(xy\). For the diagonal \(xy\), we have
\(\{x,u,y\}\notin E(H)\) since \(xuy\) is good. Also
\(\{x,v,y\}\notin E(H)\), since such a hyperedge would share \(xv\) with
\(e_x\) and \(vy\) with \(e_y\), forcing \(e_x\) and \(e_y\) into the same block.
For the diagonal \(vu\), the only possible second hyperedge besides
\(e_x=\{x,u,v\}\) is \(\{y,u,v\}\). This hyperedge is also impossible, since it
would share \(uv\) with \(e_x\) and \(vy\) with \(e_y\), again forcing \(e_x\)
and \(e_y\) into the same block. Thus \(vxuyv\) is rare, contradicting that \(xuy\) is good.

\end{proof}

Let $\mathcal D$ be the set of unordered pairs $\{x,y\}$ that occur as endpoints of at least one cross-block path, and let $D=|\mathcal D|$.  

\begin{lemma}\label{lem:DQ}
We have $D\ge Q-12m$.
\end{lemma}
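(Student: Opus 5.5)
The plan is to bound $Q-D$, the excess of cross-block paths over distinct endpoint pairs, by charging it to non-good paths. Since $Q$ counts cross-block paths and $D$ counts their distinct endpoint pairs,
\[
Q-D=\sum_{\{x,y\}\in\mathcal D}\bigl(c(x,y)-1\bigr),
\]
where $c(x,y)$ is the number of cross-block paths with endpoints $x,y$. It therefore suffices to show that this sum is at most $12m$.

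Fix a pair $\{x,y\}\in\mathcal D$. By Lemma~\ref{lem:cross-one}, at most one path with endpoints $x,y$ is good. Hence at least $c(x,y)-1$ of the cross-block paths with these endpoints are non-good. Summing over all pairs, $Q-D$ is at most the number of non-good cross-block paths.

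Lemma~\ref{lem:nongood} bounds all non-good paths by $21m$, which is too weak, so I will redo that count restricted to cross-block paths. A non-good path is either contained in a hyperedge or lies in a rare $4$-cycle.

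\emph{Paths inside a hyperedge.} A path $xvy$ with $\{x,v,y\}\in E(H)$ has both of its edges in the shadow of the block of that hyperedge. So it is not cross-block, and this part contributes nothing.

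\emph{Paths in rare $4$-cycles.} A non-good cross-block path $xvy$ with $\{x,v,y\}\notin E(H)$ lies in a rare $4$-cycle $wxvyw$. By Lemma~\ref{lem:rare} there are at most $6m$ such cycles. Each has a representative hyperedge on three of its vertices, and the three $3$-vertex paths of the cycle inside that triple are not cross-block. That leaves at most one path per cycle, giving only $6m$.

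The bound $12m$ suggests a different accounting: each rare cycle contributes at most two cross-block paths, and those two are the ones to be charged. I would argue directly. Take $\{x,y\}$ with cross-block paths $xv_1y,\dots,xv_ky$, none of whose triples is a hyperedge. By Lemma~\ref{lem:two-paths}, $k\le 2$, so $c(x,y)-1\le 1$.

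When $k=2$, the cycle $xv_1yv_2x$ is a $4$-cycle, so it has a representative hyperedge. I would show it is rare using the block argument from the proof of Lemma~\ref{lem:cross-one}. The point is that a hyperedge $\{x,v_i,y\}$ or $\{v_1,v_2,\cdot\}$ would merge the two blocks of a cross-block path.

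Then I would charge the pair $\{x,y\}$ to this rare cycle, using the diagonal $xy$. Each rare cycle has two diagonals, so it is charged at most twice. This gives at most $2\cdot 6m=12m$ charges.

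The main obstacle is verifying that every excess pair yields a rare cycle whose diagonal is $\{x,y\}$, with distinct pairs mapping injectively to (cycle, diagonal) choices. A further case is when a cross-block path with endpoints $x,y$ has its triple as a hyperedge, which Lemma~\ref{lem:two-paths} does not exclude. That case needs the observation that $\{x,v,y\}\in E(H)$ forces $xv$ and $vy$ into the same block, so the path is not cross-block after all.
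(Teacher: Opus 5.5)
Your final direct argument is correct and is essentially the paper's proof. The only difference is in how the excess is counted. The paper fixes one cross-block middle vertex $v_0$ and charges each further middle vertex $w$ to the rare cycle $xv_0ywx$ through its diagonal $xy$. You instead first note that $c(x,y)\le 2$ by Lemma~\ref{lem:two-paths}, since cross-block triples are never hyperedges, and then charge the pair itself. Both give $Q-D\le 2\cdot 6m$.

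Two slips should be fixed.

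\emph{The abandoned detour.} Only one of the four $3$-vertex subpaths of a $4$-cycle lies inside the representative triple; this is why Lemma~\ref{lem:nongood} speaks of ``three further paths''. The other three subpaths can all be cross-block. For example, take the hyperedges $\{a,b,c\},\{b,z,p\},\{z,a,q\}$ and the rare cycle $acbza$; here $cbz$, $bza$ and $zac$ are all cross-block. So neither your ``at most one'' nor your ``at most two'' cross-block paths per rare cycle holds, and that paragraph should be deleted.

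\emph{Rarity at the diagonal $v_1v_2$.} A single hyperedge $\{v_1,v_2,x\}$ merges no blocks. What rarity actually requires is that $\{v_1,v_2,x\}$ and $\{v_1,v_2,y\}$ are not both present. If both were, they would share $v_1v_2$ and so lie in one block, which would put $xv_1$ and $v_1y$ in the same block and contradict that $xv_1y$ is cross-block.
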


\begin{proof}
For an unordered pair $\{x,y\}$, let $k(\{x,y\})$ be the number of middle vertices $v$ for which $xvy$ is a cross-block path. Then $Q=\sum_{\{x,y\}}k(\{x,y\})$ and $D=|\{\{x,y\}:k(\{x,y\})>0\}|$. Fix a pair with $k(\{x,y\})>0$ and choose one cross-block middle vertex $v_0$. For every other such middle vertex $w$, the cycle $xv_0ywx$ is rare. Indeed, if the shared diagonal is \(xy\), then \(\{x,y,v_0\}\) joins the two blocks at
\(v_0\). 
If it is \(v_0w\), then \(\{v_0,w,x\}\) and \(\{v_0,w,y\}\) do the
same. In both cases, this contradicts that \(xv_0y\) is cross-block.
Hence, the \(k(\{x,y\})-1\) additional middle vertices yield distinct rare cycles. Since each rare cycle has only two pairs of opposite vertices, it is charged to at most two endpoint pairs. By Lemma~\ref{lem:rare}, $Q-D\le 2\cdot 6m,$
and the claim follows.
\end{proof}

We now upper-bound $P+Q$. By Lemmas \ref{lem:two-paths} and \ref{lem:cross-one}, each pair of vertices is the endpoint pair of at most two good paths, and each pair in $\mathcal D$ is the endpoint pair of at most one. Hence, the number of good paths is at most $2\binom n2-D$. With Lemmas \ref{lem:nongood} and \ref{lem:DQ},
\[
        P\le 2\binom n2-D+21m\le 2\binom n2-Q+33m,
\]
so
\begin{equation}\label{eq:upperPQ}
        P+Q\le n^2+33m.
\end{equation}

It remains to lower-bound $P+Q$. Fix a vertex $v$
and recall the blocks containing $v$ are $B_1,\ldots,B_{d_b(v)}$, and we write $a_i(v)=a_{B_i}(v)$.
We have $a_i(v)\ge2$ and $\sum_i a_i(v)=d_s(v)$ which implies $a_i(v)a_j(v)\ge 2(a_i(v)+a_j(v))-4$,
\[
        \sum_{1\le i<j\le d_b(v)}a_i(v)a_j(v)\ge 2(d_b(v)-1)  \sum_{1\le i\le d_b(v)}a_i(v)-4\binom{d_b(v)}{2} = 2(d_b(v)-1)(d_s(v)-d_b(v)).
\]
Therefore the contribution of $v$ to $P+Q$ as the middle vertex is at least
\begin{align*}    
        \binom{d_s(v)}{2}+2(d_b(v)-1)(d_s(v)-d_b(v))=\left(\frac12d_s(v)^2+2d_b(v)d_s(v)-2d_b(v)^2\right)-\frac52d_s(v)+2d_b(v)\\
        \geq \left(\frac{d_s(v)}{\sqrt2}+(2-\sqrt2)d_b(v)\right)^2-\frac52d_s(v)+2d_b(v).
\end{align*}
Since $d_s(v)\ge2d_b(v)$. Summing over all vertices gives
\[
        P+Q\ge \sum_v\left(\frac{d_s(v)}{\sqrt2}+(2-\sqrt2)d_b(v)\right)^2-O(m) \ge \frac1n\left(\frac1{\sqrt2}\sum_v d_s(v)+(2-\sqrt2)\sum_v d_b(v)\right)^2-O(m),
\]
by the Cauchy-Schwarz inequality.
By \eqref{eq:sums_low}, together with \eqref{eq:upperPQ} we get 
\[
        n^2+O(m)\geq P+Q+O(m)\geq  \frac1n\left(\frac1{\sqrt2}\sum_v d_s(v)+(2-\sqrt2)\sum_v d_b(v)\right)^2 \geq (2+\sqrt2)^2\frac{m^2}{n},
\]
 proving Theorem \ref{thm:main}.


\section*{Acknowledgments}

The research of Salia was supported by the National Science Centre grant 2021/42/E/ST1/00193.

During the preparation of this manuscript, the authors used ChatGPT
(OpenAI) to assist in exploring proof formulations, checking intermediate
arguments, and improving exposition. The authors independently verified
all mathematical claims, proofs, and references, revised the text as
needed, and take full responsibility for the content of the manuscript.

\bibliographystyle{abbrv}
\bibliography{references.bib}

\end{document}